\theoremstyle{plain}
\newtheorem{theorem}{Theorem}
\newtheorem{proposition}[theorem]{Proposition}
\newtheorem{lemma}[theorem]{Lemma}
\newtheorem{corollary}[theorem]{Corollary}
\theoremstyle{definition}
\newtheorem{definition}[theorem]{Definition}
\newtheorem{remark}[theorem]{Remark}
\begin{document}

\title[Yoshikawa eighth move and a minimal set of band moves]{Independence of Yoshikawa eighth move and\\ a minimal generating set of band moves}

\author[M. Jab{\l}onowski]{Micha{\l} Jab{\l}onowski}

\dedicatory{}

\address{Institute of Mathematics, Faculty of Mathematics, Physics and Informatics, University of Gda\'nsk, 80-308 Gda\'nsk, Poland}

\email{michal.jablonowski@gmail.com}

\keywords{marked graph diagram, surface-link, ch-diagram, link with bands, Yoshikawa moves, mirror cut surface, twisted diagram}

\subjclass[2010]{57Q45} 

\date{\today}

\begin{abstract}

Yoshikawa moves were introduced at least quarter-century ago and are still actively used by researchers. For any marked graph diagram we will define its twisted diagram and its mirror cut surface. By using a surface-link group of a mirror cut surface of a twisted diagram we will prove the independence of Yoshikawa eighth move. As a consequence we will establish a minimal generating set of band moves for links with bands.

\end{abstract}

\maketitle

\section{Introduction}\label{s1}

A surface-link is a closed $2$-manifold smoothly (or piecewise linearly and locally flatly) embedded in the Euclidean $4$-space. A marked graph diagram is a link diagram possibly with $4$-valent vertices equipped with markers that can represent a surface-link.
\par
It is known that the set of ten types of moves $\{\Omega_1, \ldots, \Omega_8, \Omega_4', \Omega_6'\}$ (presented in Fig.\;\ref{pic002}), called Yoshikawa moves, is a generating set of moves that relates two marked graph diagrams presenting equivalent surface-links. In (\cite{JKL13}, \cite{JKL15}) it is shown that any Yoshikawa move from the set $\{\Omega_1, \Omega_2, \Omega_3, \Omega_6, \Omega_6', \Omega_7\}$ is independent from the other nine types. It is an open problem (see \cite{JKL15}) whether the move $\Omega_8$ is independent from the other Yoshikawa moves.
\par
We can translate the marked graph diagrams in $\mathbb{R}^2$ to links with bands in $\mathbb{R}^3$, then instead of ten types of moves we have the set of four generating types of moves $\{M_1, M_2, M_3, M_4\}$ (presented in Fig.\;\ref{MJ_102}). Therefore, independence of some Yoshikawa moves leads us to obtain a minimal generating set of moves on links with bands. We will prove the following two main theorems.

\begin{theorem}\label{thm1}

The Yoshikawa move $\Omega_8$ cannot be realized by a finite sequence of Yoshikawa moves of the other nine types.

\end{theorem}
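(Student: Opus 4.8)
The plan is to extract, from the surface-link group of the mirror cut surface, an invariant of marked graph diagrams that is preserved by nine of the ten Yoshikawa moves yet genuinely detects the eighth. To each marked graph diagram $D$ I would associate its twisted diagram $\tilde D$, form the mirror cut surface $M(\tilde D)\subset\mathbb{R}^4$, and set $I(D):=\pi_1(\mathbb{R}^4\setminus M(\tilde D))$, the surface-link group of that mirror cut surface. Since the fundamental group of the complement is a topological invariant of the embedded surface, the entire argument reduces to understanding how each Yoshikawa move acts on $M(\tilde D)$.

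First I would show that each of the nine moves $\Omega_1,\ldots,\Omega_7,\Omega_4',\Omega_6'$ induces an ambient isotopy of the twisted mirror cut surface, so that $I(D)=I(D')$ whenever $D$ and $D'$ differ by a single such move. This is a local check: every Yoshikawa move is supported in a small disk, so it suffices to track the effect on the part of $M(\tilde D)$ lying over that disk and to produce the isotopy there---equivalently, a chain of Tietze transformations between the corresponding Wirtinger-type presentations. The twisting built into $\tilde D$ has to be chosen so that it is compatible with, and survives, all nine of these local modifications.

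Next I would exhibit an explicit pair $D_8,D_8'$ differing by a single $\Omega_8$ move and prove $I(D_8)\not\cong I(D_8')$. For this move the twisted mirror cut surfaces are \emph{not} ambient isotopic, and the group genuinely changes. Because isomorphism of finitely presented groups is undecidable in general, I would avoid comparing the groups directly and instead pass to a computable quotient---for instance the number of homomorphisms onto a suitably chosen finite group, or an abelian (Alexander-type) invariant of the complement---and verify that the two values disagree.

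The main obstacle is this final distinction. Establishing invariance under the nine moves is laborious but essentially mechanical, and the design of the twisted diagram is exactly what forces that step through; the delicate point is to arrange the mirror cut construction so that the twisting is \emph{asymmetric} with respect to the two sheets interchanged by $\Omega_8$, and then to certify non-isomorphism of the resulting surface-link groups by a concrete, checkable computation. Combining invariance under the nine moves with a single separating example for the tenth immediately shows that $\Omega_8$ cannot be realized by any finite sequence of the other nine types, which is Theorem~\ref{thm1}.
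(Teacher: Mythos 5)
Your overall architecture coincides with the paper's: twist the diagram, form the mirror cut surface, take its surface-link group, prove invariance under the nine moves, and separate an explicit $\Omega_8$-pair by an Alexander-type computation (the paper uses the multivariable first elementary ideals, getting $\langle 0\rangle$ versus $\langle (x+1)(x-1),(x+1)(y-1)\rangle$ for two diagrams of $\mathbb{S}^2\sqcup\mathbb{P}^2_-$). However, two steps in your invariance argument have genuine gaps. First, your invariant $I(D)$ is not well defined as stated: the twisted diagram depends on a choice of orientation $o$ of $L_-(D)$ (the twisting rule distinguishes coherently from non-coherently oriented marked vertices), and different choices can give different groups. The paper repairs this by taking the invariant to be the \emph{unordered set} $\mathcal{M}(D)=\{Gr(MC(D^t))\mid o\in\mathcal{O}(D)\}$ over all orientations, rather than a single group; you need this (or a canonical choice of $o$ that is itself preserved by all nine moves, which you do not supply).

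Second, your plan to show that each of the nine moves ``induces an ambient isotopy of the twisted mirror cut surface'' is too strong and fails for $\Omega_6$: there the mirror cut surface changes by a standard stabilization/destabilization, i.e.\ its surface-link \emph{type} genuinely changes (a handle is added or removed), so no ambient isotopy exists. The group is nevertheless preserved, but only because of a separate fact --- the paper's Lemma~\ref{lem2} (switching markers on an admissible diagram does not change the Wirtinger-type presentation relations, hence not the group) and its Corollary~\ref{cor1} (stabilization preserves the surface-link group). Without this extra ingredient your invariance step collapses exactly at $\Omega_6$. If you weaken ``ambient isotopy'' to ``preserves the surface-link group, possibly via Yoshikawa moves together with stabilization,'' and replace the single group by the orientation-indexed set, your outline becomes the paper's proof; the remaining work is the concrete pair of diagrams and the ideal computation, which you correctly identify as the delicate final step but do not carry out.
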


\begin{theorem}\label{thm2}

The set $\{M_1, M_2, M_3, M_4\}$ is a minimal generating set of moves on links with bands.

\end{theorem}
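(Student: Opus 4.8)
The plan is to separate the statement into its two constituent parts: that $\{M_1, M_2, M_3, M_4\}$ \emph{generates} the equivalence relation on links with bands, and that it is \emph{minimal}, i.e.\ no three of the four moves suffice. The generating property is already available from the translation between marked graph diagrams in $\mathbb{R}^2$ and links with bands in $\mathbb{R}^3$, under which the Yoshikawa moves $\{\Omega_1,\ldots,\Omega_8,\Omega_4',\Omega_6'\}$ correspond to the four band moves; since the Yoshikawa set generates, so does $\{M_1,M_2,M_3,M_4\}$. Thus the real content is minimality, which I would reformulate as: for each $i$, the move $M_i$ cannot be realized by any finite sequence of the remaining three moves.

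First I would pin down, via the established translation, how each of the ten Yoshikawa move-types is carried to a band move. This induces a partition of $\{\Omega_1,\ldots,\Omega_8,\Omega_4',\Omega_6'\}$ into four groups $G_1, G_2, G_3, G_4$, where $G_i$ collects the Yoshikawa moves translating to $M_i$ (with $M_1$ absorbing the Reidemeister-type and band-present isotopy moves $\Omega_1,\Omega_2,\Omega_3,\Omega_4,\Omega_4',\Omega_5$, and $M_2, M_3, M_4$ corresponding to the genuinely band-theoretic moves, among which $\Omega_8$ lies in the $G_4$-group). The crucial point I would then verify is that each group $G_i$ contains at least one Yoshikawa move that is independent from the other nine types: the results of \cite{JKL13,JKL15} supply such a representative from $\{\Omega_1,\Omega_2,\Omega_3,\Omega_6,\Omega_6',\Omega_7\}$ for $G_1, G_2, G_3$, while Theorem~\ref{thm1} supplies $\Omega_8$ for $G_4$. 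It is exactly here that the fact that $\Omega_4,\Omega_4',\Omega_5$ are \emph{not} known to be independent does no harm: they are distributed into groups that already carry an independent representative.

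With this in hand the reduction is as follows. Suppose, for contradiction, that $M_i$ could be written as a finite sequence of the other three band moves; concretely, whenever two links with bands differ by one $M_i$ move they are related by moves drawn only from the complementary three types. Translating such a sequence back to marked graph diagrams, every application of a move in $G_i$ would be replaceable by a finite sequence of Yoshikawa moves drawn from the three complementary groups, i.e.\ from the other nine Yoshikawa types. In particular the independent representative $\Omega_j \in G_i$ would be realizable by a finite sequence of the other nine types, contradicting its independence. Hence no $M_i$ is redundant, and the set is minimal.

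The main obstacle I anticipate is not the logical skeleton above but the faithfulness of the translation at the level of \emph{sequences} of moves: I must ensure that an $M$-move word between two links with bands pulls back to an $\Omega$-move word between the corresponding marked graph diagrams, and conversely, so that ``generated by the other three $M$'s'' really does translate into ``generated by the other nine $\Omega$'s.'' This amounts to checking that the translation and its inverse each carry a single move to a bounded word in the target move system, and that the boundary between the $M_1$-type isotopies and the three essential band moves is respected by both directions. Once this correspondence is secured, Theorem~\ref{thm2} follows from Theorem~\ref{thm1} together with the previously known independence results as a direct corollary.
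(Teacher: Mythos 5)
Your overall strategy --- pulling minimality of $\{M_1,M_2,M_3,M_4\}$ back through the diagram/band correspondence to the independence of individual Yoshikawa moves --- is exactly the paper's, and your treatment of $M_4$ via Theorem~\ref{thm1} is correct. However, there is a genuine error in the correspondence you set up for $M_1$. The moves $\Omega_1,\ldots,\Omega_5,\Omega_4'$ do not translate to $M_1$: under the passage from a marked graph diagram to its associated link with bands they become ambient isotopies of $\mathbb{R}^3$, i.e.\ they are invisible in the band calculus and belong to no $M_i$ at all. The cup move $M_1$ is a genuine move (it creates a new trivial component joined to the link by a band) and corresponds to $\Omega_6'$; the correct dictionary is $M_1,M_2,M_3,M_4\leftrightarrow\Omega_6',\Omega_6,\Omega_7,\Omega_8$, with $\Omega_1,\ldots,\Omega_5,\Omega_4'$ absorbed into isotopy. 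With your grouping, the ``independent representative'' of $G_1$ would be one of $\Omega_1,\Omega_2,\Omega_3$, and the contradiction you aim for evaporates: two diagrams differing by $\Omega_1$ already have isotopic associated links with bands, so the hypothesis ``$M_1$ is generated by the other three moves'' pulls back to no statement about $\Omega_1$ whatsoever. (Indeed, if $M_1$ corresponded only to isotopy-type moves it would be redundant and the theorem would be false for it, so identifying $M_1$ with $\Omega_6'$ is not optional.)

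The repair is the paper's one-line proof: each of $\Omega_6',\Omega_6,\Omega_7$ is independent of the other nine Yoshikawa types by \cite{JKL13}, \cite{JKL15}, and $\Omega_8$ is independent by Theorem~\ref{thm1}; a hypothetical realization of $M_i$ by the other three band moves and isotopy would pull back to a realization of the corresponding move among $\Omega_6',\Omega_6,\Omega_7,\Omega_8$ by the remaining nine Yoshikawa types, so no $M_i$ is redundant. Your closing concern about faithfulness of the translation at the level of move sequences is the right thing to flag, but it is supplied by the marker--band correspondence of Fig.~\ref{MJ_100} together with \cite{Swe01} and \cite{KeaKur08}.
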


This paper is organized as follows. In Sec.\;\ref{s2}, we review marked graphs corresponding to surface-links in the hyperbolic splitting position and their diagrams. In Sec.\;\ref{s3} for any marked graph diagram we will define its twisted diagram and its mirror cut surface. By using a surface-link group of a mirror cut surface of a twisted diagram we will prove Thm.\;\ref{thm1}. In Sec.\;\ref{s4} we will prove Thm.\;\ref{thm2}.

\section{Preliminaries}\label{s2}

Throughout this paper, we work in the standard smooth category. An embedding (or its image) of a closed (i.e. compact, without boundary) surface $F$ into $\mathbb{R}^4$ is called a \emph{surface-link} (or \emph{surface-knot} if it is connected). Two surface-links are \emph{equivalent} (or have the same \emph{type}, which is denoted also by $\cong$) if there exists an orientation preserving homeomorphism of the four-space $\mathbb{R}^4$ to itself (or equivalently auto-homeomorphism of the four-sphere $\mathbb{S}^4$), mapping one of those surfaces onto the other.
\par
We will use a word \emph{classical}, thinking about theory of embeddings of circles $S^1\sqcup\ldots\sqcup S^1\hookrightarrow \mathbb{R}^3$ modulo ambient isotopy in $\mathbb{R}^3$ with their planar or spherical generic projections.
\par
To describe a knotted surface in $\mathbb{R}^4$, we will use transverse cross-sections $\mathbb{R}^3\times\{t\}\subset\mathbb{R}^4$ for $t\in\mathbb{R}$, denoted by $\mathbb{R}^3_t$. This method introduced by Fox and Milnor was presented in \cite{Fox62}.
\par
It is well-known (\cite{Lom81}, \cite{KSS82}, \cite{Kam89}) that any surface-link $F$ admits a \emph{hyperbolic splitting}, i.e. there exists a surface-link $F'$ satisfying the following: $F'$ is equivalent to $F$ and has only finitely many Morse's critical points, all maximal points of $F'$ lie in $\mathbb{R}^3_1$, all minimal points of $F'$ lie in $\mathbb{R}^3_{-1}$, all saddle points of $F'$ lie in $\mathbb{R}^3_0$.

\begin{figure}[ht]
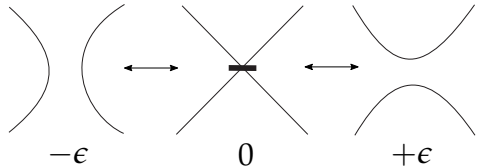

\begin{center}
\begin{lpic}[b(0.5cm)]{./pictures/m001(6.2cm)}
	\lbl[t]{15,-2;$-\epsilon$}
  \lbl[t]{60,-2;$0$}
  \lbl[t]{102,-2;$+\epsilon$}
	\end{lpic}
		\caption{Rules for smoothing a marker.\label{pic001}}
\end{center}
\end{figure}

The zero section $\mathbb{R}^3_0\cap F'$ of the surface $F'$ in the hyperbolic splitting described above gives us then a $4$-regular graph. We assign to each vertex a \emph{marker} that informs us about one of the two possible types of saddle points (see Fig.\;\ref{pic001}) depending on the shape of the section $\mathbb{R}^3_{-\epsilon}\cap F'$ or $\mathbb{R}^3_{\epsilon}\cap F'$ for a small real number $\epsilon>0$. The resulting (rigid-vertex) graph is called a \emph{marked graph} presenting $F$.
\par
Making a projection in general position of this graph to $\mathbb{R}^2\times\{0\}\times\{0\}\subset\mathbb{R}^4$ and assigning types of classical crossings between regular arcs, we obtain a \emph{marked graph diagram} . For a marked graph diagram $D$, we denote by $L_+(D)$ and $L_-(D)$ the classical link diagrams obtained from $D$ by smoothing every vertex as presented in Fig.\;\ref{pic001} for $+\epsilon$ and $-\epsilon$ case respectively. We call $L_+(D)$ and $L_-(D)$ the \emph{positive resolution} and the \emph{negative resolution} of $D$, respectively.
\par
Any abstractly created marked graph diagram is a \emph{ch-diagram} (or it is \emph{admissible}) if and only if both its resolutions are trivial classical link diagrams. 
\par
A \emph{band} on a link $L$ is an image of an embedding $b:I\times I\to\mathbb{R}^3$ intersecting the link $L$ precisely in the subset $b(\partial I\times I)$. A \emph{link with bands} $LB$ in $\mathbb R^3$ is a pair $(L, B)$ consisting of a link $L$ in $\mathbb R^3$ and a finite set $B=\{b_1, \dots, b_n\}$ of pairwise disjoint $n$ bands spanning $L$.

\begin{figure}[ht]
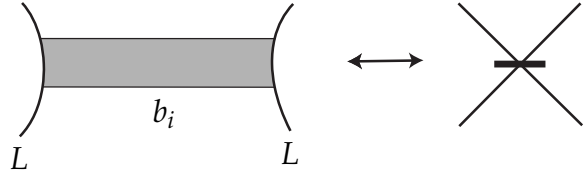

\begin{center}
\begin{lpic}[b(0.5cm)]{./pictures/MJ_100(7.5cm)}
  \lbl[t]{30,8;$b_i$}
   \lbl[t]{0,-2;$L$}
   \lbl[t]{56,-1;$L$}
	\end{lpic}
	\caption{A band corresponding to a marked vertex.\label{MJ_100}}
\end{center}
\end{figure}

By an ambient isotopy of $\mathbb R^3$, we shorten the bands of a link with bands $LB$ so that each band is contained in a small $2$-disk. Replacing the neighborhood of each band to the neighborhood of a marked vertex as in Fig.\;\ref{MJ_100}, we obtain a marked graph, called a \emph{marked graph associated with} $LB$. Conversely, when a marked graph $G$ in $\mathbb R^3$ is given, by replacing each marked vertex with a band as in Fig.\;\ref{MJ_100}, we obtain a link with bands $LB(G)$, called a \emph{link with bands associated with} $G$. 
\par
Let $D$ be a ch-diagram with associated link with bands $LB(D)=(L,B)$, $L=L_-(D)$, $B=\{b_1, \dots, b_n\}$ and $\Delta_1,\ldots,\Delta_a \subset \mathbb{R}^3$ be mutually disjoint $2$-disks with $\partial(\cup_{j=1}^a\Delta_j)= L_+(D)$, and let $\Delta_1',\ldots,\Delta_b' \subset \mathbb{R}^3$ be mutually disjoint $2$-disks with $\partial(\cup_{k=1}^b\Delta_k')= L_-(D)$. We define $\mathcal S(D) \subset \mathbb{R}^3 \times \mathbb{R} = \mathbb{R}^4 $ a \emph{surface-link corresponding to diagram} $D$ by the following cross-sections.

$$
(\mathbb{R}^3_t, \mathcal{S}(D) \cap \mathbb{R}^3_t)=\left\{%
\begin{array}{ll}
(\mathbb R^3, \emptyset) & \hbox{for $t > 1$,}\\
(\mathbb R^3, L_+(D) \cup (\cup_{j=1}^a\Delta_j)) & \hbox{for $t = 1$,} \\
(\mathbb R^3, L_+(D)) & \hbox{for $0 < t < 1$,} \\
(\mathbb R^3, L_-(D) \cup (\cup_{i=1}^n b_i)) & \hbox{for $t = 0$,} \\
(\mathbb R^3, L_-(D)) & \hbox{for $-1 < t < 0$,} \\
(\mathbb R^3, L_-(D) \cup (\cup_{k=1}^b\Delta_k')) & \hbox{for $t = -1$,} \\
(\mathbb R^3, \emptyset) & \hbox{for $ t < -1$.} \\
\end{array}
\right.
$$

It is known that the surface-knot type of $S(D)$ does not depend on choices of trivial disks (cf. \cite{KSS82}). It is straightforward from the construction of $S(D)$ that $D$ is a marked graph diagram presenting $S(D)$.
\par
In \cite{Yos94} Yoshikawa introduced local moves on admissible marked graph diagrams that do not change corresponding surface-link types and conjectured that the converse is also true. It was resolved (\cite{Swe01}, \cite{KeaKur08}, \cite{JKL15}) as follows. Any two marked graph diagrams representing the same type of surface-link are related by a finite sequence of Yoshikawa local moves presented in Fig.\;\ref{pic002} and an isotopy of the diagram in $\mathbb{R}^2$.

\begin{figure}[ht]
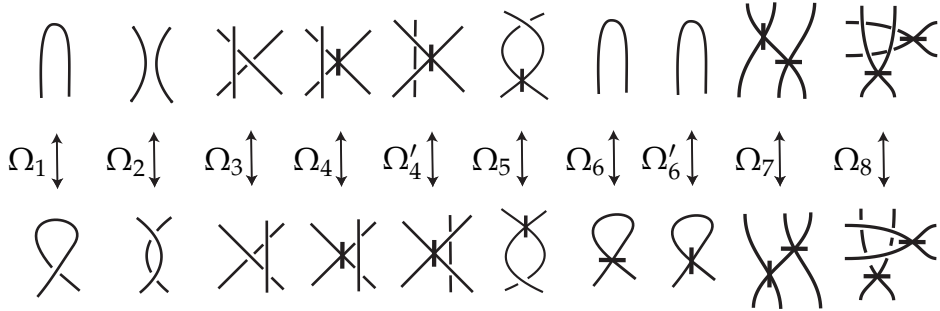

\begin{center}
\begin{lpic}[l(0.4cm)]{./pictures/m002(12cm)}
	\lbl[r]{3,33;$\Omega_1$}
  \lbl[r]{25,33;$\Omega_2$}
	\lbl[r]{47,33;$\Omega_3$}
	\lbl[r]{67,33;$\Omega_4$}
	\lbl[r]{87,33;$\Omega_4'$}
	\lbl[r]{107,33;$\Omega_5$}
	\lbl[r]{128,33;$\Omega_6$}
	\lbl[r]{145,33;$\Omega_6'$}
	\lbl[r]{166,33;$\Omega_7$}
	\lbl[r]{188,33;$\Omega_8$}
	\end{lpic}
\caption{A set of Yoshikawa moves.\label{pic002}}
\end{center}
\end{figure}

An orientable surface-link in $\mathbb{R}^4$ is \emph{unknotted} if it is equivalent to a surface embedded in $\mathbb{R}^3\times\{0\}\subset\mathbb{R}^4$. A marked graph diagram for an unknotted \emph{standard sphere} $\mathbb{S}^2$ is shown in Fig.\;\ref{m005}(a), an unknotted \emph{standard torus} $\mathbb{T}^2$ is in Fig.\;\ref{m005}(d). 
\par
An embedded projective plane $\mathbb{P}^2$ in $\mathbb{R}^4$ is \emph{unknotted} if it is equivalent to a surface whose marked graph diagram is of an unknotted \emph{standard projective plane}, which depicted in Fig.\;\ref{m005}(b) (resp. Fig.\;\ref{m005}(c)) is a \emph{positive} $\mathbb{P}^2_+$ (resp. a \emph{negative} $\mathbb{P}^2_-$). A non-orientable surface-knot is \emph{unknotted} if it is equivalent to some finite connected sum of unknotted projective planes.

\begin{figure}[ht]
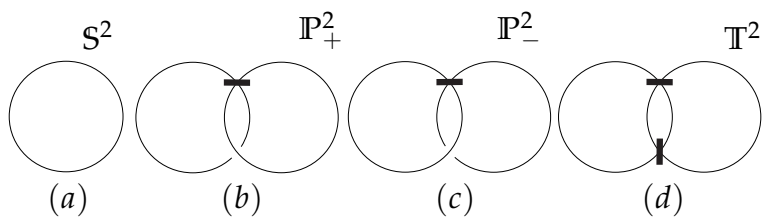

\begin{center}
\begin{lpic}[b(0.7cm),t(0.7cm)]{./pictures/m005(10cm)}
  \lbl[b]{18,25;$\mathbb{S}^2$}
  \lbl[b]{63,25;$\mathbb{P}^2_+$}
	\lbl[b]{103,25;$\mathbb{P}^2_-$}
  \lbl[b]{148,25;$\mathbb{T}^2$}
  \lbl[t]{12,-2;$(a)$}
  \lbl[t]{47,-2;$(b)$}
	\lbl[t]{90,-2;$(c)$}
  \lbl[t]{132,-2;$(d)$}
	\end{lpic}
\caption{Examples of the unknotted surfaces.\label{m005}}
\end{center}
\end{figure}

\begin{proposition}[\cite{Yos94}]\label{prop1}

The relations $H1, H2, H3, H4$ presented in Fig.\;\ref{r4} applied on ch-diagrams do not change the type of associated surface-link.

\end{proposition}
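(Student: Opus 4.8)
The plan is to verify each of the four relations separately, using the explicit cross-sectional (movie) description of the surface-link $\mathcal{S}(D)$ recorded above. Since each of $H1,H2,H3,H4$ is local, i.e.\ it modifies a ch-diagram $D$ only inside a small disk in $\mathbb{R}^2$ and leaves it unchanged outside, it suffices to produce, for each relation, an ambient isotopy of $\mathbb{R}^4$ supported in a small $4$-ball that carries $\mathcal{S}(D)$ onto $\mathcal{S}(D')$, where $D'$ denotes the diagram obtained by applying the relation. Because the part of $\mathcal{S}(D)$ lying outside this ball is untouched, the global equivalence $\mathcal{S}(D)\cong\mathcal{S}(D')$ follows from the local one.

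First I would translate each relation into its effect on the associated link with bands $LB(D)=(L_-(D),B)$ via the correspondence of Fig.\;\ref{MJ_100}. The relations then split into two kinds. Those that merely reposition a band (or an arc of $L$) within $\mathbb{R}^3$ without altering the way the bands are attached are realized by an ambient isotopy $h_s$ of $\mathbb{R}^3$ carrying $LB(D)$ to $LB(D')$; the product isotopy $\mathrm{id}_{\mathbb{R}}\times h_s$ of $\mathbb{R}^4=\mathbb{R}^3\times\mathbb{R}$ is level-preserving and carries $\mathcal{S}(D)$ to a surface-link with the same movie as $\mathcal{S}(D')$, save possibly for the choice of capping disks, so the invariance of $\mathcal{S}(\cdot)$ under that choice (cf.\ \cite{KSS82}) yields $\mathcal{S}(D)\cong\mathcal{S}(D')$ at once.

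For the remaining relations, those that genuinely change a resolution $L_+(D)$ or $L_-(D)$ by sliding a band across a sheet of the surface or over a trivial cap, I would build the interpolating isotopy by pushing the relevant band along the spanning disk $\Delta_j$ or $\Delta_k'$ that caps the corresponding resolution. In the $4$-dimensional movie this is a handle slide that introduces no new Morse critical points, and again the freedom in the choice of $\Delta_j,\Delta_k'$ supplied by \cite{KSS82} absorbs the resulting change of capping disks.

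The main obstacle will be precisely this last class of relations. One must check that applying the move keeps \emph{both} resolutions trivial, so that $D'$ is again a ch-diagram and $\mathcal{S}(D')$ is defined, and that the two systems of capping disks before and after the move are isotopic rel their moving boundary. Establishing this compatibility, and thereby reducing the move to a composition of a planar isotopy and a disk-choice change covered by \cite{KSS82}, is the heart of the argument; once it is in place the four cases follow by direct inspection of the local models in Fig.\;\ref{r4}.
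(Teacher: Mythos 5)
The paper does not actually prove this proposition: it is imported verbatim from \cite{Yos94}, and the only justification the paper itself offers is the later remark that each of $H1,\ldots,H4$ can be realized as a composition of the moves $\Omega_1,\ldots,\Omega_7,\Omega_4',\Omega_6'$ together with planar isotopy. That gives a cleaner route than yours: since those nine moves are already known to preserve the type of the associated surface-link, exhibiting each $H_i$ as such a composition finishes the argument with no new $4$-dimensional geometry. Your strategy --- translating each relation into the band picture and realizing it either by a level-preserving ambient isotopy of $\mathbb{R}^3\times\mathbb{R}$ or by a slide absorbed into the freedom of choosing the trivial disks --- is instead essentially the method of the cited source, and it is sound in principle; it also correctly isolates the two issues that need care (preservation of admissibility, and independence of $\mathcal{S}(D)$ from the capping disks, which is exactly what \cite{KSS82} supplies).

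The genuine shortcoming is that your text never engages with what $H1,H2,H3,H4$ actually are. Everything after the first paragraph is a classification scheme (``isotopy-type moves'' versus ``cap-slide-type moves'') applied to unspecified moves, and you explicitly defer ``the heart of the argument'' to a case-by-case inspection that is not carried out. For a statement whose entire content is four concrete local pictures, a proof must at minimum sort each of the four relations into one of your two classes and exhibit the isotopy or the disk-exchange for it; as written, nothing would break if one of the $H_i$ were replaced by a move that genuinely changes the surface-link (e.g.\ one requiring a band pass through a transverse sheet), which shows the argument has not yet used any property of the moves in Fig.\;\ref{r4}. One further caution for when you do the cases: a slide that changes a resolution link must be checked to keep that resolution \emph{trivial}, not merely defined, since otherwise $D'$ fails to be a ch-diagram and $\mathcal{S}(D')$ as constructed here does not exist; you flag this but treat it as a formality rather than something to verify per move.
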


\begin{figure}[ht]
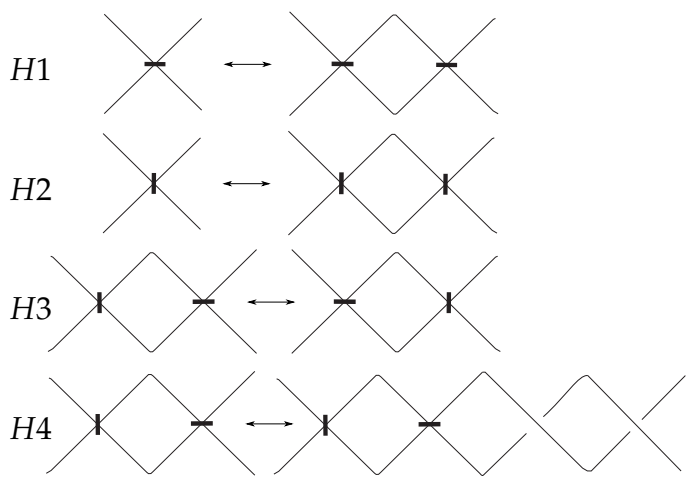

\begin{center}
\begin{lpic}[l(0.9cm)]{./pictures/r4(8.5cm)}
\lbl[r]{2,136;$H1$}
\lbl[r]{2,96;$H2$}
\lbl[r]{2,56;$H3$}
\lbl[r]{2,16;$H4$}
	\end{lpic}
\caption{Additional Yoshikawa relations.\label{r4}}
\end{center}
\end{figure}

\section{Independence of Yoshikawa eighth move}\label{s3}

A \emph{standard stabilization/destabilization} is defined by adding/deleting an unknotted handle attached locally. The \emph{surface-link group} of a surface-link $F$, denoted by $Gr(F)$, is the fundamental group $\pi_1(\mathbb{R}^4\backslash \text{int}(N(F)))$ where $N(F)$ is a tubular neighborhood of $F$.

\begin{definition}
Let $D$ be a marked graph diagram such that $L_-(D)$ is a trivial link and let $\Delta_1',\ldots,\Delta_b' \subset \mathbb{R}^3$ be mutually disjoint $2$-disks with $\partial(\cup_{k=1}^b\Delta_k')=L_-(D)$. Denote by $LB(D)$ the associated link with bounds $(L, B)$ with $L=L_-(D)$ and $B=\{b_1, \dots, b_n\}$, and let $B^*=\{b^*_1, \dots, b^*_n\}$ be the dual bands of $B$ (associated to changing corresponding markers to their second type). We define a \emph{mirror cut surface} of $D$ denoted by $MC(D)\subset\mathbb{R}^3\times\mathbb{R}=\mathbb{R}^4$ as a surface-link defined by the following cross-sections.

$$
(\mathbb{R}^3_t, MC(D) \cap \mathbb{R}^3_t)=\left\{%
\begin{array}{ll}
(\mathbb R^3, \emptyset) & \hbox{for $t > 2$,}\\
(\mathbb R^3, L_-(D) \cup (\cup_{k=1}^b\Delta_k')) & \hbox{for $t = 2$,} \\
(\mathbb R^3, L_-(D)) & \hbox{for $1 < t < 2$,} \\
(\mathbb R^3, L_+(D) \cup (\cup_{i=1}^n b^*_i)) & \hbox{for $t = 1$,} \\
(\mathbb R^3, L_+(D)) & \hbox{for $0 < t < 1$,} \\
(\mathbb R^3, L_-(D) \cup (\cup_{i=1}^n b_i)) & \hbox{for $t = 0$,} \\
(\mathbb R^3, L_-(D)) & \hbox{for $-1 < t < 0$,} \\
(\mathbb R^3, L_-(D) \cup (\cup_{k=1}^b\Delta_k')) & \hbox{for $t = -1$,} \\
(\mathbb R^3, \emptyset) & \hbox{for $ t < -1$.} \\
\end{array}
\right.
$$

\end{definition}

The mirror cut surface of a given diagram $D$ can be intuitively seen as the result after making cut of the surface-link $\mathcal S(D)$ corresponding to $D$ in the level of $t=\frac{1}{2}$, making copy of its lower part and gluing it with the mirror image of its copy by their common boundary link $L_+(D)$. In other words put a mirror in the cut level and see the whole surface-link from below. A depiction of saddles in the middle cross-sections is presented in Fig.\;\ref{r1}.

\begin{figure}[ht]
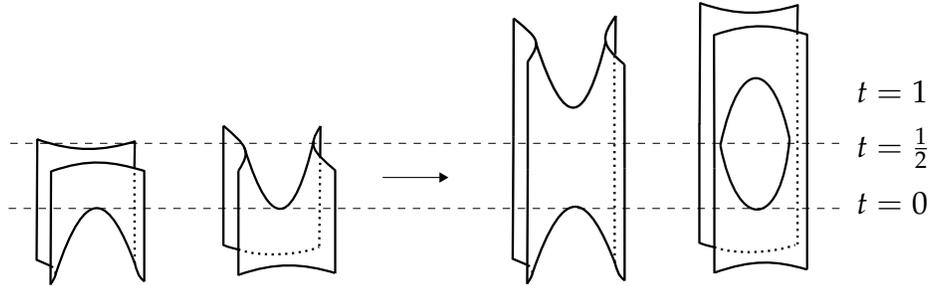

\begin{center}
\begin{lpic}[r(1.3cm)]{./pictures/r1(11.1cm)}
\lbl[l]{212,49;$t=1$}
\lbl[l]{212,35;$t=\frac{1}{2}$}
\lbl[l]{212,21;$t=0$}
	\end{lpic}
\caption{Saddles change by the mirror cut transformation.\label{r1}}
\end{center}
\end{figure}

\begin{definition}

For a marked graph diagram $D$, let $\mathcal{O}(D)$ denote the set of all possible orientations of $L_-(D)$. For a fixed $o\in\mathcal{O}(D)$, define \emph{twisted graph diagram} of $D$ denoted by $D^t$ as follows. If in a neighborhood of a marked vertex of $D$, the orientation of strands are coherent (see the top of Fig.\;\ref{r2}), call this vertex \emph{type} $d$ and do not change diagram $D^t$ in the neighborhood. If the corresponding orientation near a marked vertex in $D$ is non-coherent, see the left-bottom of Fig.\;\ref{r2}, then call this vertex \emph{type} $e$ and change it by adding a classical crossing as shown in the right-bottom of the figure. Note that the marked graph diagram $D^t$ may not be admissible.

\end{definition}

\begin{figure}[ht]
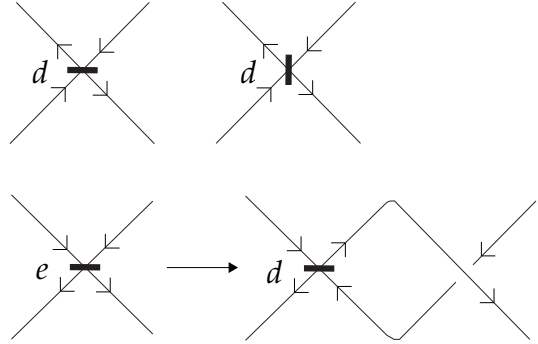

\begin{center}
\begin{lpic}[]{./pictures/r2(7cm)}
\lbl[r]{9,62;$d$}
\lbl[r]{57,62;$d$}
\lbl[r]{9,16;$e$}
\lbl[r]{63,16;$d$}
	\end{lpic}
\caption{Rules for twisting a marked graph diagram.\label{r2}}
\end{center}
\end{figure}

By ambient isotopy in $\mathbb{R}^4$ we can obtain the following ch-diagram of the hyperbolic splitting of the mirror cut surface for the twisted ch-diagram.

\begin{proposition}\label{prop2}

Let $D$ be a ch-diagram with a fixed $o\in\mathcal{O}(D)$. A marked graph diagram presenting $MC(D^t)$, denoted by $\mathcal{W}(D)$ can be obtained by replacing each marked vertex from $D$ by the rule presenting in Fig.\;\ref{r3}, with the orientation in the neighborhood of the new vertex induced from $o$. 

\end{proposition}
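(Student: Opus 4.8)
The plan is to verify the local replacement rule one marked vertex at a time, using the fact that $MC(D^t)$ is a product cobordism away from finitely many critical levels. First I would observe that, by the definition of the mirror cut surface, the only critical points of $MC(D^t)$ occur at the cap levels $t=-1,2$ and at the two saddle levels $t=0$ and $t=1$; between consecutive critical levels each cross-section is constant, so the surface is a product of an arc system with the $t$-interval. Consequently a marked graph diagram for $MC(D^t)$ is determined, up to isotopy in $\mathbb{R}^2$, by the minima and maxima (which reproduce the disks bounding the negative resolution) together with the local contribution of the two saddles sitting over each marked vertex of $D$. Since distinct marked vertices have disjoint neighborhoods in $\mathbb{R}^2$, these contributions can be analyzed independently and then glued along the product strands.

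Next I would bring $MC(D^t)$ into hyperbolic splitting position by an ambient isotopy of $\mathbb{R}^4$ supported in disjoint vertical tubes over the vertex neighborhoods. Inside such a tube the surface is the band $b_i$ at $t=0$ (the surgery $L_-(D^t)\rightsquigarrow L_+(D^t)$) followed by its dual band $b^*_i$ at $t=1$ (the surgery $L_+(D^t)\rightsquigarrow L_-(D^t)$), matching the movie of Fig.\;\ref{r1}. I would push the level of $b^*_i$ down to the level of $b_i$ so that both saddles lie at the single level $t=0$; since they are spatially separated in $\mathbb{R}^3$, the result is a marked graph diagram with two marked vertices inside the tube, joined by the intermediate $L_+$-arcs. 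This two-vertex gadget, carrying the classical crossing prescribed by the twisting rule of Fig.\;\ref{r2} for a \emph{type} $e$ vertex (and no extra crossing for a \emph{type} $d$ vertex), is exactly the local picture asserted in Fig.\;\ref{r3}.

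The bookkeeping step is to check that the markers and crossing signs of the gadget agree with Fig.\;\ref{r3} and that the strand orientations are those induced from $o$. Here I would use the reflective symmetry of $MC(D^t)$ across the level $t=\tfrac{1}{2}$: the dual band $b^*_i$ is the mirror image of $b_i$, so its marker is of the opposite type, and, for a \emph{type} $e$ vertex, the reflection reverses the twisting crossing, which is precisely what pins down the relative sign of the two crossings appearing in the gadget. Tracking $o$ through $b_i$ orients the $L_+$-arcs coherently, and reflecting back through $b^*_i$ returns the orientation induced by $o$ on the outgoing strands, so the orientation near each new vertex is as claimed.

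Finally I would reassemble: replacing every marked vertex of $D$ by its gadget while leaving the product strands and the trivial caps untouched yields a single marked graph diagram $\mathcal{W}(D)$ whose associated surface-link is, level by level, isotopic to $MC(D^t)$. The step I expect to be the main obstacle is the explicit local isotopy above together with its orientation bookkeeping: one must verify that lowering $b^*_i$ past the intermediate $L_+$-level introduces no crossings beyond those recorded in Fig.\;\ref{r3}, and that the interaction of the mirror reflection with the twisting crossing produces the correct marker types and signs uniformly for both vertex types. Once this local model is confirmed, the gluing is routine because the supporting isotopies are pairwise disjoint.
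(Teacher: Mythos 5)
Your proposal is correct and follows essentially the same route as the paper, which justifies the proposition only by the one-line assertion that the hyperbolic-splitting diagram of $MC(D^t)$ is obtained ``by ambient isotopy in $\mathbb{R}^4$''; your level-by-level analysis localized to disjoint tubes over the vertex neighborhoods is exactly that intended argument, worked out in more detail. The only point to tighten is the claim that the two saddles are already ``spatially separated'': the dual band $b^*_i$ sits at the same spot in $\mathbb{R}^3$ as $b_i$, so before merging the levels $t=1$ and $t=0$ one must first slide $b^*_i$ off along the intermediate $L_+$-arcs, and it is precisely this slide that produces the pair of adjacent marked vertices with opposite marker types shown in Fig.\;\ref{r3}.
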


\begin{figure}[ht]
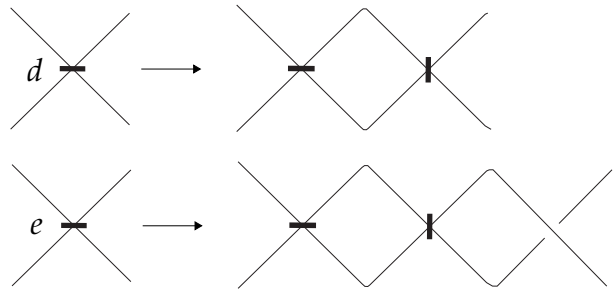

\begin{center}
\begin{lpic}[l(0.7cm)]{./pictures/r3(8cm)}
\lbl[r]{9,60;$d$}
\lbl[r]{9,17;$e$}
	\end{lpic}
\caption{A marked graph diagram presentation of $\mathcal{W}(D)$.\label{r3}}
\end{center}
\end{figure}

\begin{remark}

Note that the moves from Prop.\;\ref{prop1} can be obtained by a combination of only Yoshikawa types of moves $\Omega_1, \ldots, \Omega_7, \Omega_4', \Omega_6'$ and ambient isotopy in $\mathbb{R}^2$. If we change the order of marked crossings and classical crossings appeared in the bottom-right picture in Fig.\;\ref{r3}, the local diagram can be transformed into the shape shown in the bottom-right picture in Fig.\;\ref{r3} by using $H3, H4, \Omega_2, \Omega_5$.

\end{remark}

\begin{lemma}\label{lem2}

Let $V$ be a set of marked vertices of a marked graph diagram $D_1$ presenting a surface-link $F_1$. Define $D_2$ as a graph diagram obtained by switching markers on each vertex from $V$ to its second type (i.e. rotating them by $90^{\circ}$ around vertex). If $D_2$ is a ch-diagram (i.e is admissible) presenting surface-link $F_2$ then $Gr(F_2)\cong Gr(F_1)$.

\end{lemma}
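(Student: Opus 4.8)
The plan is to show that the surface-link group $Gr(\mathcal{S}(D))$ admits a Wirtinger-type presentation that can be read off from the underlying $4$-valent graph diagram of $D$ together with its classical crossing data \emph{alone}, with no reference to the markers. Since $D_1$ and $D_2$ have the same underlying $4$-valent graph diagram and the same classical crossings (switching markers on the vertices of $V$ changes neither the graph nor the over/under information at any classical crossing), the two groups $Gr(F_1)$ and $Gr(F_2)$ will then be given by \emph{identical} presentations, and the isomorphism will follow at once. Note that since both $D_1$ and $D_2$ are admissible, both $L_-(D_i)$ and $L_+(D_i)$ are trivial links and $\mathcal{S}(D_i)$ is well defined, so the presentation below applies to each.

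To produce such a presentation I would compute $\pi_1(\mathbb{R}^4\setminus\mathrm{int}\,N(F))$ by van Kampen applied to the hyperbolic splitting, sweeping the parameter $t$ and recording the effect of each critical level of $\mathcal{S}(D)$. Passing the minima at $t=-1$ introduces the meridian generators of $L_-(D)$; as $L_-(D)$ is a trivial link, after a Wirtinger reduction these are just the meridians of the arcs of the diagram, with the Wirtinger relations of the classical crossings. Passing the saddles at $t=0$ imposes, for each band, the relation equating the meridians of the two arcs of $L_-(D)$ that the band joins; locally there is no conjugating word, since a marked vertex carries no over/under crossing information. Finally, passing each maximum at $t=1$ imposes \emph{no} new relation: in the local cup/cap model, capping a component of the trivial link $L_+(D)$ from above only introduces a redundant meridian generator that is identified with the meridian already present, leaving the group unchanged.

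The crucial local computation occurs at a marked vertex. Writing $m_1,m_2,m_3,m_4$ for the meridians of the four incident edges in cyclic order, one resolution of the vertex joins the edges into the arcs with $m_1=m_2$ and $m_3=m_4$ just below the saddle, while the band then equates those two arcs, giving $m_1=m_3$; together these force $m_1=m_2=m_3=m_4$. If the marker is switched, the resolution instead yields $m_2=m_3$ and $m_4=m_1$, and the band relation becomes $m_2=m_4$, which again forces all four meridians to coincide. Thus the local contribution of each marked vertex is the single relation that its four incident edge-meridians are equal, and this relation is independent of the marker. Assembling everything, $Gr(\mathcal{S}(D))$ is presented by the edge-meridians of the underlying graph, with the Wirtinger relations at the classical crossings and the all-four-equal relation at each marked vertex; this presentation is manifestly the same for $D_1$ and $D_2$, so $Gr(F_2)\cong Gr(F_1)$.

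The main obstacle I expect is justifying the two structural facts on which the marker-independence rests: that the maxima contribute no relations, and that the saddle relation is a plain equality of the two banded meridians rather than an equality up to some conjugating word. Both points I would settle with the explicit local van Kampen models of a cap and of a saddle $1$-handle, using that the band near a marked vertex passes over nothing and that the vertex itself records no crossing, so that the only conjugations present are those already accounted for by the Wirtinger relations at the (marker-independent) classical crossings.
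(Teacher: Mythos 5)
Your proposal is correct and follows essentially the same route as the paper, which simply invokes Fox's group calculation method and observes that the resulting presentation relations do not depend on the marker types while the diagram stays admissible. You have merely spelled out the details the paper leaves implicit: the Wirtinger/van Kampen derivation over the hyperbolic splitting, the fact that maxima add no relations, and the local check that either marker at a vertex forces the same ``all four edge-meridians equal'' relation.
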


\begin{proof}

By the group calculation method presented in \cite{Fox62} we see that the presentation relations remains the same when markers type are switched and the corresponding surface is still closed. So the surface-link group (obtained from a marked graph diagram) cannot detect types of markers, as long as the diagram remains admissible.

\end{proof}

\begin{remark}

As a consequence of Lem.\;\ref{lem2} and Prop.\;\ref{prop1}, and by looking at Yoshikawa table of surface-link marked graph diagrams \cite{Yos94} we can simply obtain (without calculations) the following isomorphisms $Gr(2_1^1)\cong Gr(0_1)$ and $Gr(10_1^{-2,-2})\cong Gr(8_1^{-1,-1})$.

\end{remark}

Considering a marked graph diagram of the unknotted standard torus presented in Fig.\;\ref{m005}(d), this together with Lem.\;\ref{lem2} and Prop.\;\ref{prop1} yields the following.

\begin{corollary}\label{cor1}

A standard stabilization/destabilization of a surface-link does not change its group.

\end{corollary}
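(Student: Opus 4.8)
The plan is to reduce the statement to a local, diagram-level computation and then feed it through Lemma~\ref{lem2} and Proposition~\ref{prop1}. Since a standard stabilization is defined by attaching an unknotted handle in a local ball, it suffices to treat the orientable case of connect-summing locally with the unknotted standard torus $\mathbb{T}^2$ of Fig.~\ref{m005}(d) (the destabilization being the inverse operation, and the non-orientable handles being handled by the same argument using the projective-plane pieces of Fig.~\ref{m005}(b)--(c)). Concretely, given a surface-link $F$, choose a ch-diagram $D$ with $\mathcal{S}(D)\cong F$ and insert, inside a small disk meeting $D$ in a single regular arc, the two-vertex configuration of Fig.~\ref{m005}(d). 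Call the resulting diagram $D'$; by the cross-sectional description of $\mathcal{S}$ the two new saddles realize exactly the attached handle, so $\mathcal{S}(D')$ is a standard stabilization of $F$, and $D'$ is again admissible because the inserted piece has trivial resolutions.

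Next I would apply Lemma~\ref{lem2} to the set $V$ consisting of the two newly inserted marked vertices, switching both markers to their second type to obtain a diagram $D_2$. The key point is that, locally, the standard-torus piece with both markers switched is precisely the standard-sphere piece: its positive and negative resolutions are trivial, so both global resolutions $L_+(D_2)$ and $L_-(D_2)$ remain trivial links, i.e.\ $D_2$ is still a ch-diagram. Lemma~\ref{lem2} then gives $Gr(\mathcal{S}(D_2))\cong Gr(\mathcal{S}(D'))$, even though $\mathcal{S}(D_2)$ and $\mathcal{S}(D')$ need not be equivalent surface-links.

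It remains to identify $\mathcal{S}(D_2)$ with $F$. The switched piece is a locally attached trivial sphere, that is, $D_2$ differs from $D$ by a connect-summed copy of the standard-sphere diagram $\mathbb{S}^2$; using the relations $H1,\dots,H4$ of Proposition~\ref{prop1} together with planar isotopy, this trivial cap is removed and $D_2$ reduces to $D$, so $\mathcal{S}(D_2)\cong F$ and hence $Gr(\mathcal{S}(D_2))\cong Gr(F)$. Chaining the two isomorphisms yields $Gr(\text{stabilization of }F)\cong Gr(\mathcal{S}(D'))\cong Gr(\mathcal{S}(D_2))\cong Gr(F)$, as desired. I expect the main obstacle to be the purely local verification underlying the second and third paragraphs: checking directly from Fig.~\ref{m005}(d) that switching both markers of the standard-torus (or projective-plane) piece produces an admissible diagram representing a removable trivial sphere. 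This is exactly the computation behind the prototype isomorphism $Gr(2^1_1)\cong Gr(0_1)$ recorded in the preceding remark, now carried out in an arbitrary ambient diagram $D$ rather than for $F=\mathbb{S}^2$ alone.
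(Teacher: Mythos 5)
Your argument is correct and is exactly the paper's intended proof: the paper derives the corollary in one sentence from the standard-torus diagram of Fig.~\ref{m005}(d) together with Lemma~\ref{lem2} and Proposition~\ref{prop1}, and you have simply spelled out the marker-switching and the removal of the resulting trivial sphere piece in detail. No gaps; the local admissibility check you flag is the only verification needed and it goes through as you describe.
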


For a ch-diagram $D$ let us define a map $\mathcal{M}\left(D\right)$ as a (unordered) set over all possible orientations
$$\mathcal{M}\left(D\right)=\{Gr(MC(D^t))\;|\;o\in\mathcal{O}(D)\}.$$

\begin{proposition}\label{prop3}

For a marked graph diagram $D$ the set $\mathcal{M}\left(D\right)$ is an invariant of nine Yoshikawa type of moves $\Omega_1, \ldots, \Omega_7, \Omega_4', \Omega_6'$ .

\end{proposition}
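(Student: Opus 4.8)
The plan is to verify invariance under a single application of each of the nine moves separately; a finite sequence then follows by transitivity. Fix such a move carrying $D$ to $D'$. Since the surface-link group $Gr$ depends only on the equivalence type of the surface-link, it suffices to match, for every orientation, the value $Gr(MC(D^t))$ against some value $Gr(MC((D')^{t}))$ and conversely; because $\mathcal{M}$ is an \emph{unordered} set, I do not need a bijection of orientations, only that the two collections of group values coincide. I would therefore first record, for each move, the induced correspondence between $\mathcal{O}(D)$ and $\mathcal{O}(D')$: the classical moves and the strand-versus-vertex moves induce a Reidemeister move on $L_-$ and hence a natural identification of orientation sets, while the vertex-creating moves require a short bookkeeping argument controlling how orientations of $L_-(D')$ restrict to $L_-(D)$.

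Next I would use Prop.\;\ref{prop2} to replace each mirror cut surface by the concrete marked graph diagram $\mathcal{W}(D)$, obtained by the local substitution of Fig.\;\ref{r3} according to whether each vertex has type $d$ or type $e$ with respect to $o$. A local move on $D$ then becomes an explicit local modification of $\mathcal{W}(D)$, and for the classical moves $\Omega_1,\Omega_2,\Omega_3$ and the strand-versus-vertex moves $\Omega_4,\Omega_4',\Omega_5$ I would exhibit the two resulting pictures as related by Yoshikawa moves together with the relations $H1$--$H4$ of Prop.\;\ref{prop1} and planar isotopy. The type-$d$ and type-$e$ cases are handled in parallel; the extra classical crossing introduced at a type-$e$ vertex is absorbed using the $H3, H4, \Omega_2, \Omega_5$ reconciliation noted in the Remark following Prop.\;\ref{prop2}. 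This gives an equivalence of the surface-links and hence equality of the corresponding groups.

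The vertex-creating moves $\Omega_6, \Omega_6'$ and the two-vertex move $\Omega_7$ change the genus of the mirror cut surface, so here I would not expect a surface-link equivalence. Instead, because $MC$ is symmetric about the level $t=\tfrac{1}{2}$, creating a single marked vertex in $D$ introduces a mirror-symmetric pair of saddles in $MC(D^t)$, which together form one locally attached unknotted handle; similarly $\Omega_7$ produces such standard handles or cancels them. I would identify each induced operation as a standard stabilization or destabilization and invoke Cor.\;\ref{cor1} to conclude that the group is unchanged, using Lem.\;\ref{lem2} where a marker type is forced to switch. Assembling the three cases yields $\mathcal{M}(D)=\mathcal{M}(D')$.

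The main obstacle I expect is the genus-changing moves: one must check that the handle appearing in $MC(D^t)$ is genuinely \emph{standard}, that is, unknotted and attached inside a ball disjoint from the rest of the surface, so that Cor.\;\ref{cor1} actually applies; the twisting crossings at type-$e$ vertices complicate this and must be shown not to knot or link the new handle. A secondary difficulty is the orientation bookkeeping for these same moves, where the number of components of $L_-$ can change and several orientations of one diagram may correspond to a single orientation of the other---this is precisely where the unordered-set formulation of $\mathcal{M}$ is essential.
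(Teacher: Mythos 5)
Your treatment of $\Omega_1,\ldots,\Omega_5,\Omega_4'$ matches the paper's: pass to the concrete diagram $\mathcal{W}(D)$ via Prop.~\ref{prop2} and exhibit each local change as a composition of Yoshikawa moves and the relations of Prop.~\ref{prop1}, handling types $d$ and $e$ in parallel. The gap is in your third case. You group $\Omega_6$, $\Omega_6'$ and $\Omega_7$ together as ``genus-changing'' and propose to dispose of all three by recognizing a standard stabilization/destabilization and citing Cor.~\ref{cor1}. But only $\Omega_6$ changes the Euler characteristic of the mirror cut surface: one has $\chi(MC(D^t))=2b-2n$, where $b$ is the number of components of $L_-(D)$ and $n$ the number of marked vertices, and $\Omega_6'$ increases $b$ and $n$ simultaneously while $\Omega_7$ (the band-slide, which preserves the number of marked vertices on both sides) changes neither. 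So for $\Omega_6'$ and $\Omega_7$ there is no new handle to destabilize and Cor.~\ref{cor1} is not the applicable tool. The paper instead shows directly that the induced change on $\mathcal{W}(D)$ is realized by Yoshikawa moves --- by $\Omega_1,\Omega_6,\Omega_6'$ in the case of $\Omega_6'$, and by $H3,H4,\Omega_1,\ldots,\Omega_5,\Omega_4',\Omega_7$ in the case of $\Omega_7$ --- i.e.\ these are honest surface-link equivalences, with only $\Omega_6$ requiring the stabilization argument.

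The $\Omega_7$ case is the technical heart of the proposition, and your proposal essentially replaces it with a mechanism that cannot work. Because the substitution of Fig.~\ref{r3} depends on the type $d$ or $e$ of each vertex, one must enumerate the possible type patterns of the marked vertices involved in $\Omega_7$ (up to mirror image), verify for each pattern that the two resulting pictures of $\mathcal{W}(D)$ are related by the moves listed above, and observe that the all-$e$ configuration is excluded by the orientation constraints on $L_-(D)$. None of this case analysis appears in your plan. Your instincts about orientation bookkeeping and about checking that the handle created by $\Omega_6$ is genuinely standard are sound and do require the care you describe, but as written the argument would fail at $\Omega_7$ and, less seriously, at $\Omega_6'$.
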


\begin{proof}

For a fixed $o\in\mathcal{O}(D)$ we investigate how this nine Yoshikawa moves performed on a diagram $D$ changes the marked graph diagram $\mathcal{W}(D)$ obtained as in Prop.\;\ref{prop2}. It is clear that moves $\Omega_1, \Omega_2, \Omega_3$ do not change the type of surface-link $S(\mathcal{W}(D))\cong MC(D^t)$ because this move do not involve marked vertices. Performing moves $\Omega_4, \Omega_4'$ on $D$ produces changes on $\mathcal{W}(D)$ that can be easily obtained using moves $\Omega_1, \ldots, \Omega_4, \Omega_4'$, no matter what the type $d$ or $e$ the involved marked vertex was.
\par
We see that in the case of $\Omega_5$ move the diagram $\mathcal{W}(D)$ is changed by moves that can be generated by the moves $\Omega_1, \ldots, \Omega_5, \Omega_4'$, because in this case the orientation type ($d$ or $e$) of involved marked vertex does not change, and this transformation can be done by an ambient isotopy in $\mathbb{R}^3$.
\par
Let us proceed to moves $\Omega_6, \Omega_6'$ on the diagram $D$, the corresponding changes on $\mathcal{W}(D)$ are shown in Fig.\;\ref{r5}. We see that in the case of $\Omega_6$ move the surface-link $S(\mathcal{W}(D))$ is changed by a standard stabilization/destabilization, because the marked vertex involved has to be of type $d$, so by Cor.\;\ref{cor1} the group $Gr(S(\mathcal{W}(D)))$ does not change its isomorphism class. In the case of $\Omega_6'$ move the diagram $\mathcal{W}(D)$ is changed by moves that can be generated by the moves $\Omega_1, \Omega_6, \Omega_6'$.

\begin{figure}[ht]
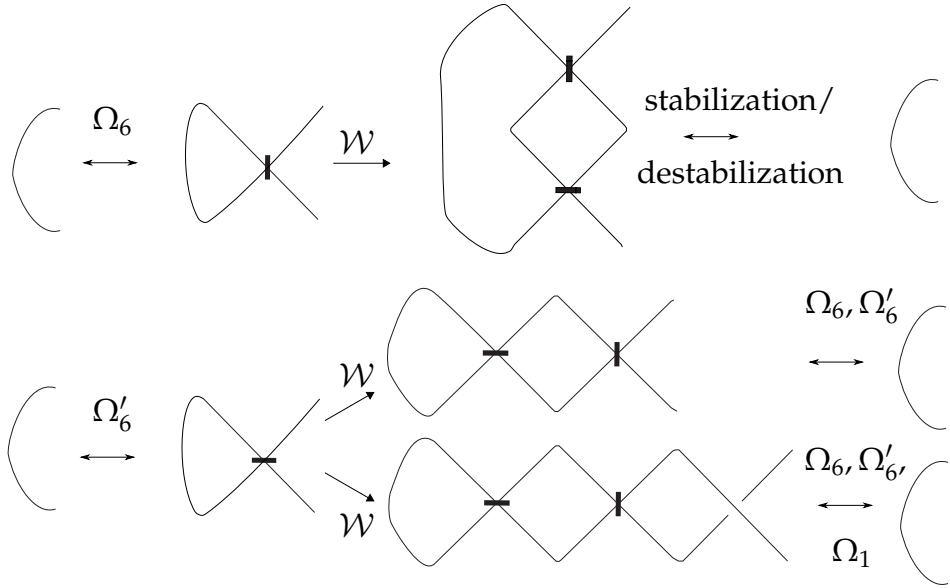

\begin{center}
\begin{lpic}[]{./pictures/r5(12.5cm)}
\lbl[b]{100,124;$\mathcal{W}$}
\lbl[b]{100,57;$\mathcal{W}$}
\lbl[b]{100,14;$\mathcal{W}$}
\lbl[b]{30,129;$\Omega_6$}
\lbl[b]{30,44;$\Omega_6'$}
\lbl[b]{210,135;stabilization/}
\lbl[b]{210,115;destabilization}
\lbl[b]{242,75;$\Omega_6, \Omega_6'$}
\lbl[b]{243,30;$\Omega_6, \Omega_6', $}
\lbl[b]{242,5;$\Omega_1$}
	\end{lpic}
\caption{Changes by $\Omega_6$ and $\Omega_6'$ moves.\label{r5}}
\end{center}
\end{figure}

Finally, we investigate the change on $\mathcal{W}(D)$ by performing $\Omega_7$ move on the diagram $D$ which is depicted in Fig.\;\ref{r6}, where (taking additionally their mirror images) all the possible cases are covered. We see there that transformations (two-sided arrows) follow from moves $H3, H4, \Omega_1, \ldots \Omega_5, \Omega_4', \Omega_7$. The case where all four marked vertex involved in move $\Omega_7$ are of type $e$ cannot occur as a consequence of definition of type $e$ and investigation of all possible cases of types of orientations of classical link $L_-(D)$ involved in the move.

\begin{figure}[ht]
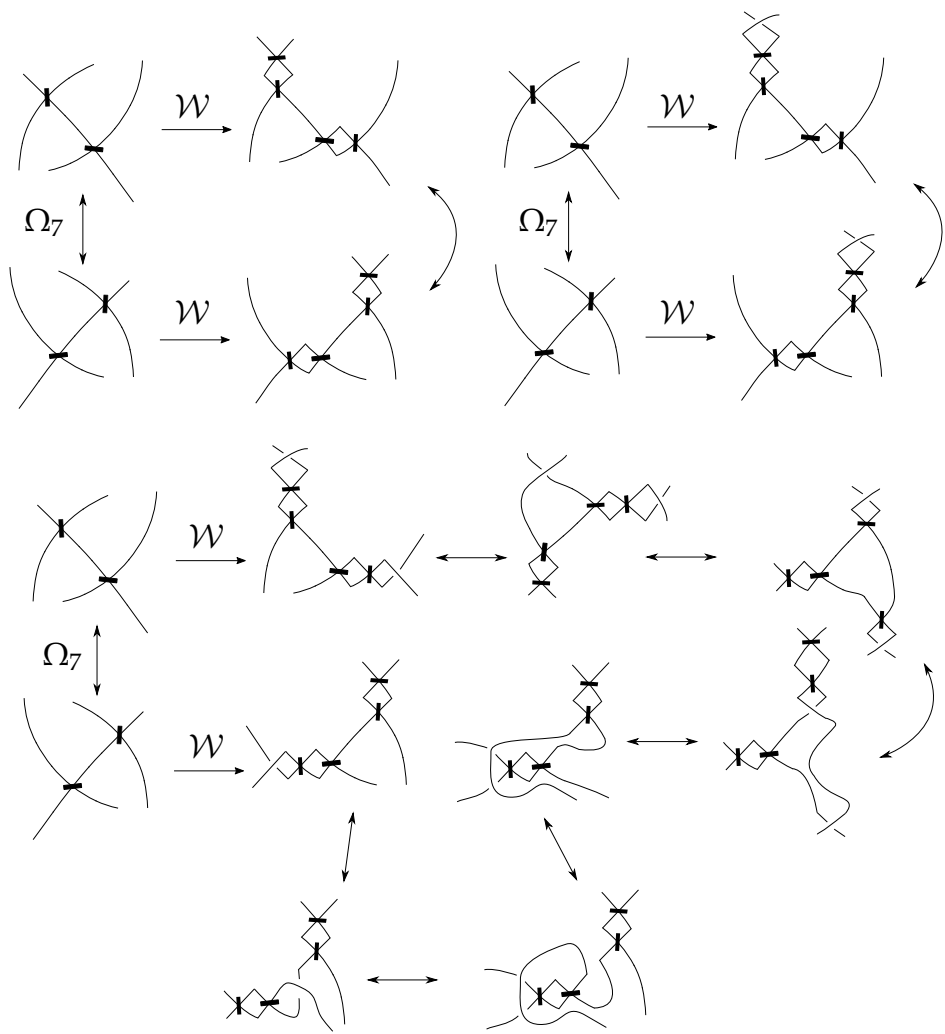

\begin{center}
\begin{lpic}[]{./pictures/r6(12.4cm)}
\lbl[r]{12,177;$\Omega_7$}
\lbl[r]{16,82;$\Omega_7$}
\lbl[r]{120,177;$\Omega_7$}
\lbl[b]{40,200;$\mathcal{W}$}
\lbl[b]{40,155;$\mathcal{W}$}
\lbl[b]{43,106;$\mathcal{W}$}
\lbl[b]{43,60;$\mathcal{W}$}
\lbl[b]{146,200;$\mathcal{W}$}
\lbl[b]{146,155;$\mathcal{W}$}
	\end{lpic}
\caption{Changes by $\Omega_7$ move.\label{r6}}
\end{center}
\end{figure}

\end{proof}

\begin{proof}[Proof of Thm.\;\ref{thm1}.]

Let $D_1$ and $D_2$ be two ch-diagrams presented in Fig.\;\ref{r7}. They are diagrams of equivalent surface-links of $\mathbb{S}^2\sqcup\mathbb{P}^2_-$ as they differ by just one Yoshikawa move (four of classical crossings are changed). However, $\mathcal{M}\left(D_1\right)\not=\mathcal{M}\left(D_2\right)$, because the set of the multivariable first elementary ideals for the corresponding elements in $\mathcal{M}\left(D_1\right)$ is $\{\left<0\right>\}$ and for $\mathcal{M}\left(D_2\right)$ is $\{\left<(x+1)(x-1),(x+1)(y-1)\right>\}$. Therefore, by Prop.\;\ref{prop3} diagram $D_1$ cannot be transformed into $D_2$ by using nine Yoshikawa types of moves $\Omega_1, \ldots, \Omega_7, \Omega_4', \Omega_6'$ and ambient isotopy in $\mathbb{R}^2$.

\begin{figure}[ht]
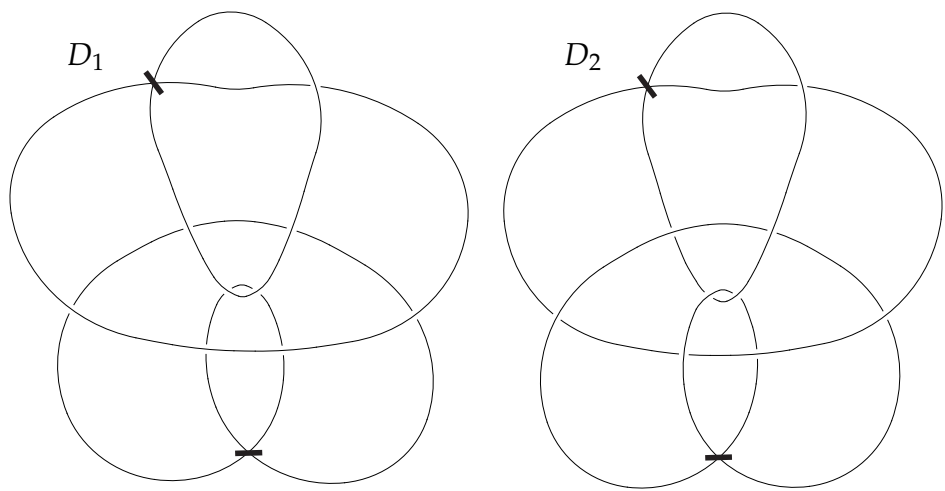

\begin{center}
\begin{lpic}[]{./pictures/r7(12.4cm)}
 \lbl[r]{25,113;$D_1$}
  \lbl[r]{155,113;$D_2$}
	\end{lpic}
\caption{Diagrams $D_1$ and $D_2$.\label{r7}}
\end{center}
\end{figure}

\end{proof}

\section{A minimal generating set of band moves}\label{s4}

Examples of the links with bands of the unknotted surfaces are presented in Fig.\;\ref{MJ_106}, for the examples of nontrivial links with bands refer to \cite{Jab16}.

\begin{figure}[ht]
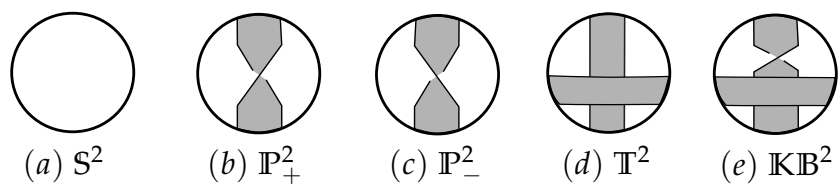

\begin{center}
\begin{lpic}[b(0.75cm)]{./pictures/MJ_106(11cm)}
  \lbl[t]{10,-2;$(a)\;\mathbb{S}^2$}
  \lbl[t]{46,-2;$(b)\;\mathbb{P}^2_+$}
	\lbl[t]{80,-2;$(c)\;\mathbb{P}^2_-$}
  \lbl[t]{112,-2;$(d)\;\mathbb{T}^2$}
	\lbl[t]{144,-2;$(e)\;\mathbb{KB}^2$}
	\end{lpic}
		\caption{Examples of the unknotted surfaces.\label{MJ_106}}
		\end{center}
\end{figure}

We have the analogous (to the marked graph diagram case) moves for links with bands (\cite{Swe01}, \cite{KeaKur08}), i.e. any two links with bands representing the same type of surface-link are related by a finite sequence of local moves presented in Fig.\;\ref{MJ_102} (and an isotopy in $\mathbb{R}^3$). The moves $M_1, M_2, M_3, M_4$ are called \emph{cup move, cap move, band-slide, band-pass} respectively.

\begin{figure}[ht]
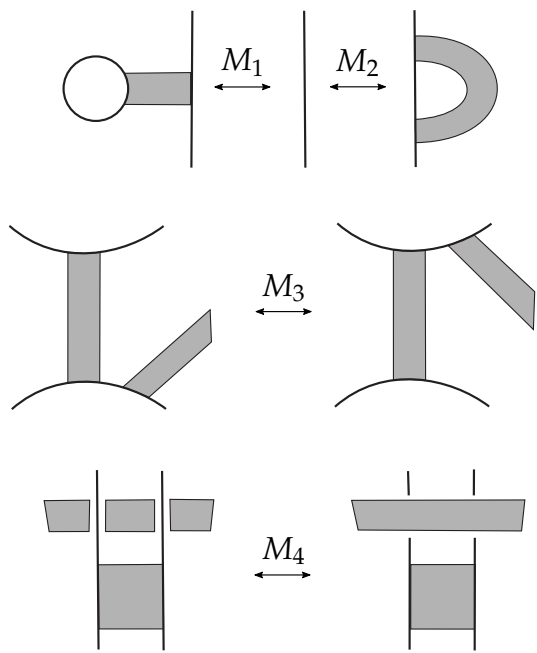

\begin{center}
\begin{lpic}[b(0.5cm)]{./pictures/MJ_102(7cm)}
   \lbl[b]{57,140;$M_1$}
   \lbl[b]{85,140;$M_2$}
   \lbl[b]{67,85;$M_3$}
   \lbl[b]{67,21;$M_4$}
	\end{lpic}
	\caption{Allowed moves on a link with bands.\label{MJ_102}}
	\end{center}
\end{figure}

\begin{proof}[Proof of Thm.\;\ref{thm2}.]

It follows directly from the correspondence between markers and bands (see Fig.\;\ref{MJ_100}), correspondence between moves $M_1, M_2, M_3, M_4$ and $\Omega_6', \Omega_6, \Omega_7, \Omega_8$ respectively and the independence of any move from the set $\{\Omega_6, \Omega_6', \Omega_7, \Omega_8\}$ from the other nine types of Yoshikawa moves.

\end{proof}

\end{document}